\newtheorem{thm}{Theorem}[section]
\newtheorem{Prop}[thm]{Proposition}
\newtheorem{lemma}[thm]{Lemma}
\newtheorem{co}[thm]{Corollary}
\newcommand{\bc}{{\mathbb C}}
\newcommand{\bh}{{\mathbb H}}
\newcommand{\br}{{\mathbb R}}
\newcommand{\bz}{{\mathbb Z}}
\newcommand{\ra}{\rightarrow}
\newcommand{\vol}{\operatorname{vol}}
\let \cal\mathcal
\begin{document}
\title[Toledo invariant of lattice in SU(2,1)]
{Toledo invariant of lattices in SU(2,1) via symmetric square
}
\author{Inkang Kim and  Genkai Zhang}

\address{School of Mathematics,
KIAS, Heogiro 85, Dongdaemun-gu
Seoul, 130-722, Republic of Korea}
\email{inkang@kias.re.kr}
\address{Mathematical Sciences, Chalmers University of Technology and
Mathematical Sciences, G\"oteborg University, SE-412 96 G\"oteborg, Sweden}
\email{genkai@chalmers.se}

\date{}
\maketitle

\begin{abstract}In this paper, we address the issue of quaternionic Toledo invariant to study the character variety of  two dimensional complex hyperbolic uniform lattices into $SU(n,2)$. We construct four distinct representations to prove that the character variety contains at least four distinct components.
We also address the existence of
holomorphic horizontal lift to  various period domains of $SU(n,2)$.
 \end{abstract}
  \footnotetext[1]{2000 {\sl{Mathematics Subject Classification.}}
   51M10, 57S25.}
    \footnotetext[2]{{\sl{Key words and phrases.}} Quaternionic structure, Toledo invariant, character variety, symmetric square.
}
\footnotetext[3]{Research partially supported by
STINT-NRF grant (2011-0031291). Research by G. Zhang is supported partially
 by the Swedish
Science Council (VR). I. Kim gratefully acknowledges the partial support
of grant  (NRF-2017R1A2A2A05001002) and a warm support of
Chalmers University of Technology during his stay.
}

\section{Introduction}

After Weil's local rigidity theorem of uniform lattices in semisimple Lie groups, there have been many generalizations in different contexts.
Due to Margulis' superrigidity and Corelette's theorem, lattices in
higher rank semisimple Lie groups and in quaternionic, octonionic
hyperbolic groups are very rigid. Hence it is only meaningful to study
representations of uniform lattices $\Gamma$
in real and complex hyperbolic groups $SO(n, 1)$ respectively
$SU(n, 1)$ into different Lie groups $G$.

Several studies have been done for complex hyperbolic lattices 
$\Gamma$ in various semisimple Lie groups $G$.
In terms of maximal representations, Burger and Iozzi studied the representations of a lattice in $SU(1,p)$ with values in a Hermitian Lie group $G$ \cite{BI1, BI2}. Koziarz and Maubon \cite{KM}  studied the similar representations
in rank 2 Hermitian Lie groups. Pozzetti \cite{Po} deals with maximal representations of complex hyperbolic lattices
in $SU(m,n)$.
Recently Oscar-Garcia and Toledo \cite{GT} proved a global rigidity of complex hyperbolic lattices in quaternionic hyperbolic spaces.  More precisely, they defined the Toledo invariant $c(\rho)$ of a complex hyperbolic lattice $\Gamma$ under the representation
$\rho:\Gamma\ra PSp(n,1)$ by
$$\int_{M} f^*_\rho \omega \wedge \omega_0^{n-2}$$ where $f_\rho$ is a descended map to $M=\Gamma\backslash PSU(n,1)/S(U(n)\times U(1))$ from a  $\rho$-equivariant
map from $H^n_\bc$ to $H^n_\bh$. Here $\omega$ is the quaternionic K\"ahler form on $H^n_\bh$ and $\omega_0$ is the complex K\"ahler form
on $M$. They showed that this invariant $c(\rho)$ satisfies Milnor-Wood inequality and the maximality holds if and only if the representation
stabilizes a copy of $H^n_\bc$ inside $H^n_\bh$. Such a use of Toledo invariant goes back to Toledo \cite{Tol89} where he proves that a maximal representation from a surface group into $SU(1,q)$ fixes a complex geodesic.
Hernandez \cite{Her91} also studied maximal representations from a surface group into $SU(2,q)$ and showed that the image must stabilize a symmetric space associated to the group $SU(2,2)$. 

In this paper we attempt to generalize above results to different quaternionic K\"ahler manifolds. The first goal would be to prove a similar result in
$$\Gamma\subset SU(n,1) \subset SU(m,2)$$  using  Toledo invariant
$$c(\rho)=\int_M f_\rho^*\omega^{\frac{n}{2}}$$ for $n$ even where $\omega$ is the quaternionic K\"ahler 4-form on the associated symmetric space of $SU(m,2)$.
This Toledo invariant is constant on each connected component of the character variety
$\chi(\Gamma, SU(2n,2))$. Hence it can be used to distinguish different components of the character variety.

As a starting point, we consider the simplest case
$$\Gamma\subset SU(2,1) \subset SU(n,2),$$ $n\ge 4$.
 This case is interesting  because
the symmetric space
of   $SU(n, 2)$ has both Hermitian and quanternionic
structures and it is worth to study the interplay between them.
We will consider several different embeddings coming from the natural
  holomorphic, totally real and symmetric square representations, and obtain
\begin{thm}
There are at least 7 distinct connected components in 
$\chi(\Gamma,SU(n,2))$
 where $\Gamma\subset SU(2,1)$ is a uniform lattice.
\end{thm}
Here the group $SU(n,2)$ acts on $\text{Hom}(\Gamma, SU(n,2))$ via conjugation on the target group and the character
variety is defined by
$$\chi(\Gamma, SU(n,2))=\text{Hom}(\Gamma, SU(n,2))//SU(n,2)$$ in the sense of geometric invariant
theory.

This is one of the first examples known in higher dimensional complex
hyperbolic lattices. For different examples of character variety
$\chi(\Gamma, SU(2,1))
$, see \cite{Domingo}.
It is known in surface group case that there are $6(g-1)+1$
distinct components in $\chi(\pi_1(S), PSU(2,1))$ \cite{GKL,Xia}.
 Indeed, in \cite{GKL}, a discrete faithful representation
$\rho\in \chi(\Gamma, SU(2,1))$ is constructed such that on each component of $S\setminus \Sigma_0$, where $\Sigma_0$ is a set of disjoint simple closed geodesics, $\rho$  stabilizes either a complex line or a totally real plane. Then the Toledo invariants are maximal
on pieces contained in complex line, are zero on pieces contained in totally real plane. Hence one can realize any even integer
between $\chi(S)$ and $-\chi(S)$. This implies that there are $6(g-1)+1$ distinct components in $\chi(\pi_1(S), PSU(2,1))$.

To prove the global rigidity for $\rho\in \chi(\Gamma, G)$, the common
technique known so far is to consider a holomorphic horizontal lifting
of a $\rho$-equivariant map to a proper period domain (or twistor
space) where one can apply  complex geometric. It was successful in the case that Oscar-Garcia and Toledo considered in \cite{GT}. But in general, for higher rank case it is not known if  there always
 exists a horizontal holomorphic lifting.
\begin{thm}Consider the symmetric
square representation $\rho$ of $SU(2, 1)$ in $SU(4, 2)$ and in
$SU(n, 2)$, $n\ge 4$,
via the inclusion
 $SU(4, 2)\subseteq SU(n, 2)$.
Let $\iota:B\ra \cal X$ be the totally geodesic map induced
by the representation $\rho$
 where $B=SU(2,1)/S(U(2)\times U(1))$ and
 $\cal X=SU(n,2)/S(U(n)\times U(2))$. Then it lifts to a  holomorphic horizontal map to the period domain
$\mathcal D_2=SU(n,2)/S(U(n-1)\times U(1)\times U(2))$.
\end{thm}
See Section \ref{symmetric} for the definition of the symmetric square representation.

We thank D. Toledo for numerous discussions and suggestions for various period domains for liftability problem.
We also thank B. Klingler for a suggestion for possible different
period domains. Lastly, we thank  Mathematics department at Stanford
University where the first author spent a sabbatical year and the
second
author visited in June 2014 while part of this paper was  written.

\section{Quaternionic structure of $\mathcal X$ and its period
domains
}\label{Quaternionic}
\subsection{Quaternionic K\"ahler manifold in general}
A Riemannian manifold $M$ of real dimension $4n$ is quaternionic K\"ahler if its holonomy group is contained in $Sp(n)Sp(1)$. We denote by $\cal P_M$ the canonical $Sp(n)Sp(1)$-reduction of the principal bundle of orthogonal frames of $M$, and by $\cal E_M$ the canonical three-dimensional parallel subbundle $\cal P_M\times_{Sp(n)Sp(1)}\br^3$ of $End(TM)$. Since $Sp(n)Sp(1)$-module $\wedge^4 (\br^{4n})^*$ admits a unique trivial submodule of rank 1, any quaternionic K\"ahler manifold $M$ admits a nonzero closed 4-form $\omega$, canonical up to homothety. In \cite{Sal}, it is proved that the form $\omega$ (properly normalized) is the Chern-Weil form of the first Pontryagin class
$p_1(\cal E_M)\in H^4(M,\bz)$.

Let $N$ be a smooth closed manifold and $\rho:\pi_1(N)\ra G$ a
representation into a quaternionic K\"ahler group $G$, i.e., the
associated symmetric space $X=G/K$ is a quaternionic K\"ahler
noncompact irreducible symmetric space. Choose any $\rho$-equivariant
smooth map $\phi:\tilde N\ra  X$
from the universal covering space $\tilde N$ to $X$. The pullback
$\phi^*\cal E_X$ descends to a bundle over $N$, still denoted
$\phi^*\cal E_X$. By the functoriality of characteristic classes, the
4-form $\phi^*\omega$ represents the Pontryagin class $p_1(\phi^*\cal
E_X)\in H^4(N,\bz)$. As $X$ is contractible, any two
$\rho$-equivariant maps give rise to the same class depending only on
$\rho$. Then by the integrality of the Pontryagin class, the
quaternionic Toledo invariant $c(\rho)=\int_N
\phi^*\omega^{\frac{n}{2}}$, for even $n$,  is constant on each connected component of the character variety.

\subsection{K\"a{}hler and Quaternionic structures of $SU(2n,2)/S(U(2n)\times U(2))$}\label{negative}
Let $G=SU(p,q)$, $p\ge q$, be  in its standard realization as linear
transformations on $\mathbb C^{p+q}=
\mathbb C^{p}\oplus\mathbb C^{q}$
preserving the indefinite Hermitian form of signature $(p,q)$. 
We shall later specify $G$ to the case $SU(2n, 2)$ or $SU(n, 1)$.
Let $\cal X$ be the Hermtian symmetric space $\cal X=G/K$,
$K=S(U(p)\times U(q))$.
We recall briefly \cite{Sa} 
the Harish-Chandra realization of the symmetric
space $\cal X$ into $M_{p\times q}$  which might 
be useful in understanding various totally geodesic
embeddings in our present paper.
Fix $V^+_0=\mathbb C^{p},V^-_0=\mathbb C^q$ 
mutually orthogonal subspaces of $\bc^{p+q}$ which are positive and negative definite respectively with respect to the  Hermitian form $h^\bc$.
Fix orthonormal basis $\{e_1,\cdots,e_{p}\},\{e_{p+1}, \cdots, e_{p+q}\}$
 of $V^+_0,V^-_0$ respectively. Then $G$ acts on the set $\cal X$ of $q$-dimensional negative definite subspaces. Any other $q$-dimensional negative definite subspace $V^-$ is a graph of a unique linear map $A_{p\times q}=(z_{ij})$ from $V^-_0$ so that
$$\sum_{i=1}^{p}e_i z_{ij}+e_{p+j},\ j=1,\dots, q$$ form a basis of
$V^-$. 
Hence $\cal X$ is identified with
$$\cal X=\{Z\in M_{p\times q}: I_q - Z^t\bar Z>0\}.$$

The center of maximal compact subgroup $K$ 
is parameterized by the center of $U(p)$ and
it defines a complex K\"ahler structure. To be more precise
let $\mathfrak g$ be the Lie
algebra of  $G$, and
$\mathfrak g=\mathfrak t \oplus \mathfrak p$ its
 Cartan decomposition, where $\mathfrak k$ is the Lie algebra of $K$, with
$\mathfrak p$ consisting of matrices of the form
$$
\begin{pmatrix}
  0 & A\\
  A^* & 0 \end{pmatrix}, 
A\in M_{p\times q}.
$$ 
The real tangent space at $o=eK$ of $\cal X=G/K$
 is identified with $\mathfrak p$. The complex structure $J$ on $T_o\cal X$ acts as
  $$J \begin{pmatrix}
  0 & A\\
  A^* & 0 \end{pmatrix}=\begin{pmatrix}
       0 & iA\\
       -iA^* & 0 \end{pmatrix}.$$ The K\"ahler metric on $T_o\cal X$ is
 $$g_o(X,Y)=2\text{Tr}(YX)=4 \text{Re} \text{Tr}(B^*A),\ \text{for}\       X=\begin{pmatrix}
  0 & A\\
  A^* & 0 \end{pmatrix}, Y=\begin{pmatrix}
  0 & B\\
  B^* & 0 \end{pmatrix}.$$ 
 The corresponding complex K\"ahler form is
\begin{eqnarray}  \label{kahlerform}
\Omega_o(X,Y)=g_o(JX, Y).
\end{eqnarray}

Now let $G=SU(2n, 2)$. The second factor $U(2)$ of $K$ defines a quaternionic structure as follows.
The holomorphic tangent 
space of $\cal X$ at $o$ is identified with
$\begin{pmatrix}
0 &  Z\\
0  & 0
\end{pmatrix}$, $Z\in M_{2n\times 2}$.
The real tangent space will be parametrized and identified
with the holomorphic tangent space.
The three elements of $SU(2)$
$$\begin{pmatrix}
i &  0\\
0  & -i
\end{pmatrix} ,             \begin{pmatrix}
                            0 &  1 \\
                            -1 & 0    \end{pmatrix},   \begin{pmatrix}
                                                      0 &  i \\
                                                      i &
                                                      0\end{pmatrix}$$ 
act on the tangent space as the quaternionic multiplications
by $i,j,k$  as follows. The adjoint action of $\begin{pmatrix}
i &  0\\
0  & -i
\end{pmatrix}$ is
$$\begin{pmatrix}
I_{2n} &   0 \\
0 & \begin{pmatrix}
     -i & 0 \\
     0  & i \end{pmatrix} \end{pmatrix}      \begin{pmatrix}
                                             0 & \begin{pmatrix}
                                                   x_1 & y_1 \\
                                                   x_2 & y_2 \\
                                                   \cdots \\
                                                   x_{2n}& y_{2n} \end{pmatrix}\\
                                               0   & 0  \end{pmatrix}         \begin{pmatrix}
                                                                               I_{2n} & 0 \\
                                                                               0 & \begin{pmatrix}
                                                                                   i & 0 \\
                                                                                   0 & -i \end{pmatrix}\end{pmatrix}$$$$=\begin{pmatrix}
                                                                                       0 & \begin{pmatrix}
                                                   x_1i & y_1(-i) \\
                                                   x_2i & y_2(-i) \\
                                                   \cdots \\
                                                   x_{2n}i & y_{2n}(-i) \end{pmatrix}\\
                                               0   & 0  \end{pmatrix} . $$
 Similarly we find the action
by the other two elements. We can express the actions
in the usual quaternionic algbra, so we identify
a matrix   $(x, y)\in M_{2n\times 2}=\mathbb C^{2n}\times
\mathbb C^{2n}$ with a quaternionic vector
                                               $q\in \bh^{2n}$,
with $\mathbb H=\mathbb C +
\mathbb C j$ being the quaternionic number, by
                                               $$
X=(x, y)\leftrightarrow q_X=(x_1+ y_1 j, x_2+y_2 j, \cdots, x_{2n}+ y_{2n}j),$$ 
the previous matrix is identified with the quaternionic vector
                                  \begin{equation*}
                                              \begin{split}
                                            &\quad\,
       (x_1i+y_1(-i)j, x_2i+y_2(-i)j,\cdots,x_{2n}i+y_{2n}(-i)j)\\
&=      (x_1+y_1j, x_2+y_2j,\cdots,x_{2n}+y_{2n}j)i =q_Xi,
                                                 \end{split}
                                               \end{equation*}
i.e.,  the adjoint action of $\begin{pmatrix}
                                                   i & 0\\
                                                   0 &
                                                   -i \end{pmatrix}$ 
is just the multiplication $q_X\mapsto q_Xi$ by $i$ on the right.
It is easy to check that the adjoint action of the other two elements correspond to the multiplication by $j$ and $k$ on the right.
When no confusion would arise we shall just write the identification
$Z\to q_Z$ as $q_Z =Z$.

The  parallel closed nondegenerate quaternionic K\"ahler 4-form, at the origin is given by
\begin{eqnarray}\label{kahler}\omega= \omega_i\wedge \omega_i +
  \omega_j 
\wedge \omega_j +
\omega_k \wedge \omega_k\end{eqnarray}  where
$$\omega_u(X,Y)=\text{Re}( q_X\cdot {\bar q_Y} u), \quad u=i, j, k,
$$ and
$p\cdot \bar q=\sum_{m=1}^{2n} p_m \bar q_m$ is the standard quaternionic Hermitian form
on $\bh^{2n}$ and
$\text{Re}\ x=x_0$ is the real part
of a quaternionic number
$x=x_0+x_1i+x_2j +x_3 k$.


Then it is easy to check that this $\omega$ and $\Omega_o^2$, where
$\Omega_o$
 is the complex K\"ahler form on $\cal X$
defined above, are linearly independent on $H^4(M,\br)$ where $M=\Gamma\backslash \cal X$.

\subsection{Twister space and Period domain
of the quaternionic structures of $SU(n,2)/S(U(n)\times U(2))$
}\label{period}

We describe  one  twister space and
one period domain
for the quaternionic structures of $G/K=SU(n, 2)/S(U(n)\times U(2))$
which are not $G$-equivalent.
By $G$-equivalent we mean there exists a $G$-invariant
biholomorphic mapping between them.

For any Lie algebra $\mathfrak s$ we denote
its complexification by $\mathfrak s^{\mathbb C}$.
Let $\mathcal D_1=SU(n, 2)/S(U(n)\times U(1)\times U(1))$
be a twistor space.
We shall realize it as an open
subset in a homogeneous flag manifold.
Let $W=\mathbb C^{n+2}$ and let
$W^\ast$ be the dual space equipped with the
$G$-invariant metric of signature $(n, 2)$.
Denote
 $\{\epsilon_j\}$ in $W^\ast$
 the dual basis of $\{E_j\}$.
Let $\mathcal D_1^c$ 
be the set
of orthogonal pairs $(l, \lambda)$
in
 $\mathbb P(W)\times \mathbb  P(W^\ast)
$,
 i.e.,
satisfying  $\epsilon(e)=0$ for all $(e, \epsilon)
\in l\times \lambda$. Then 
 $\mathcal D_1^c$  is a compact homogeneous space of 
$SU(n+2)$,  $\mathcal D_1^c=SU(n+2)/S(U(n)\times U(1)\times
U(1))$. 
As a homogeneous
manifold of $SU(n,2)$, $\mathcal D_1=SU(n, 2)/S(U(n)\times U(1)\times U(1))$
can be realized as the open domain in $\mathcal D_1^c$ of
 $(l, \lambda)$ such that
$l$ and $\lambda$ are negative
definite.
Indeed, first it is elementary
to see that $SU(n, 2)$ 
acts transitively on the subset of lines.
Second we need to check that a stabilizer of $(l,\lambda)$ is
$S(U(n)\times U(1)\times U(1))$.
 A stabilizer of the negative 2-plane 
$l+(\ker \lambda)
^{\perp}$ in $W$
is $S(U(n)\times U(2))$
 and a stabilizer 
in  $S(U(n)\times U(2))$ of the
pair $(l, \ker \lambda)$
 of subspaces in $W$, equivalently
the pair  $(l,  \lambda)$ in 
 $\mathbb P(W)\times \mathbb  P(W^\ast)
$, is
exactly $U(1)\times U(1)$. Hence as a differentiable manifold
$\mathcal D_1$ has such a realization.

Then   $\mathcal D_1\subset \mathbb P(W)\times \mathbb  P(W^\ast)$
is an open subset equipped with the corresponding complex structure.

In general if a homogeneous manifold $G/(L\times U(1))$ has $U(1)$ factor in the stabilizer, it inherits a complex structure as follows.
Let $\mathfrak u(1)=\br i H_1$ and consider the root space decomposition of $\mathfrak g^\bc$ under the action of 
$$H_1=\begin{pmatrix}
0  & 0 \\
0 & \begin{pmatrix}
 1 & 0 \\
 0 & -1 \end{pmatrix} \end{pmatrix}.
$$
 Set $\mathfrak b$ to be the Borel subalgebra consisting of zero and
 negative eigenspaces. The positive eigenspace $\mathfrak n^+$ constitutes the
 holomorphic tangent space for $G^\bc/B$ 
at the base point $eB\in
G^{\bc}/B$, and further on the whole space
$G^\bc/B$, in particular for open set $$G/(L\times U(1)) \subset G^\bc/B.$$

We find the holomorphic tangent space of $
\cal D_1$ in this context.
To find a realization of the complex tangent
space we fix the pair
$(\mathbb C E_{n+2},
\mathbb C \epsilon_{n+1})$ as a base point of
$\mathcal  D_1$. The space
$\cal D_1$ is an open subset of the complex homogeneous
space of $SL(n+2, \mathbb C)/B$, 
where $B$ is
the Borel subgroup whose Lie algebra consists
of elements in $\mathfrak{sl}(n+2, \mathbb C)$
of the special form.

To justify this, note that $B$ is equal to the stabilizer of $(\mathbb C E_{n+2},
\mathbb C \epsilon_{n+1})$.
Hence $B$ should have the block matrix of form, 
$$\begin{pmatrix}
* & * & 0 \\
0 & * &  0 \\
* & * & *   \end{pmatrix},
$$
the size of the  matrix being $(n+1+1)\times (n+1+1)$.
Alternatively $\mathfrak b$ consists of  
non-positive root  spaces 
of $H_1$, i.e. eigenspaces of $ad(H_1)$.
Thus holomorphic
tangent space $\mathfrak n^+$ consists of elements of $\mathfrak g^{\mathbb C}$
of the form, the size of the matrix being 
the same as above,
$$\begin{pmatrix}
0 & 0  & * \\
* &  0 & * \\
0 & 0 &  0 \end{pmatrix}.
$$

We consider now another domain $\mathcal D_2
=SU(n, 2)/S(U(n-1)\times U(1)\times U(2))
$. More precisely
let $\{E_1, \cdots, E_n; E_{n+1}, E_{n+2}\}$ be 
the standard basis of $\mathbb C^{n+2}$ as before and
$$
H_2=\text{diag}(1, \cdots, 1, -n, 1, 0, 0)\in \mathfrak{k}^{\mathbb C}
$$
and let $U(1)=\exp(i\mathbb RH_2)$
be the corresponding subgroup of $K$.
The centralizer of $H_2$ in $K$ is then $U(n-1)\times U(1)\times
U(2)$.
Here $U(n-1)$ stands for the unitary group of the subspace
$\mathbb C^{n-1}:=\langle E_1, \cdots, E_{n-2}, E_n\rangle 
$.

Now  the  eigenspaces of positive eigenvalues
of  $\text{ad}(H_2)$ constitute
the holomorphic tangent space of $\cal D_2$:
$$\begin{pmatrix}
 0 & * & 0 & *  \\
 0 & 0 & 0 & 0  \\
 0 & * & 0 & *  \\
 0 & * & 0 & 0  \end{pmatrix}
$$
written in block form of size $((n-2) + 1 + 1 + 2)\times
((n-2) + 1 + 1 + 2)$.

The compact homogeneous space
$\mathcal D_2^c:
=SU(n+2)/S(U(n-1)\times U(1)\times U(2))$
is precisely the partial flag manifold
of pairs $(p_1, p_2)$
of subspaces  $p_1\subset p_2$ in $ \mathbb C^{n+2}$ 
of dimensions $1$ and $n$ respectively. In particular
the map $(p_1, p_2)\mapsto p_2$  from $\mathcal D_2^c$
to the Grassmannian manifold $Gr_n(n+2)$
realizes 
 $\mathcal D_2^c$ as the projectivization 
of the tautological bundle of 
$Gr_n(n+2)$.

\subsection{Cohomology groups
of Period domains}

Let $\mathcal X^c=SU(n+2)/S(U(n)\times U(2))$ 
be the compact dual of $\mathcal X$.
Then  $\mathcal X^c$  can be realized as Grassmannian manifolds
$Gr_2(n+2)$ of two planes in $\mathbb C^{n+2}$. 
Let $\pi: \mathcal D_1^c=SU(n+2)/S(U(n)\times U(1)\times U(1)),
 \mathcal D_2^c=SU(n+2)/S(U(n-1)\times U(1)\times U(2))\to 
\mathcal X^c$ be the natural fibrations.

\begin{Prop} \begin{enumerate}
\item The cohomology group $H^4(\mathcal D^c_1)$
 is three dimensional
and is generated by $\pi^*(\Omega^2), 
\pi^*(\omega), \hat{\Omega}^2
$.
\item Let $n\ge 3$. The cohomology group $H^4(\mathcal D^c_2)$
 is four  dimensional
and is generated by $\pi^*(\Omega^2), 
\pi^*(\omega),   \hat \Omega^2,
\hat \Omega\wedge \pi^*(\Omega).  $
\end{enumerate}
\end{Prop}
\begin{proof} The map $\pi$ defines
the twister space $\cal D_1^c$
as a $\bc\mathbb P^1=S^2$-bundle
over the Grassmannian manifold $\mathcal X^c$.
We recall the Gysin complex 
\cite[Proposition 14.33]{BT}
for the sphere covering
$\pi: \mathcal D_1^c\mapsto \mathcal X^c$,
$$
H^1(\mathcal X^c)
\stackrel{
\wedge e}{\to}
H^4(\mathcal X^c)
\stackrel{\pi^*}{\to}
H^4(\mathcal D_1^c) 
\stackrel{
\pi_*}
{\to}
H^2(\mathcal X^c) 
\stackrel{
\wedge e}
{\to}
H^5(\mathcal X^c)
$$
where $\wedge e$ is the multiplication by the Euler class $e$
of the sphere bundle, 
$\pi^*$ is the pull-back and 
$\pi_*$ is the integration along the fiber $S^2$.
Now $H^1(\mathcal X^c)=0, 
H^5(\mathcal X^c)=0$, 
 $H^2(\mathcal X^c)=\mathbb R$
and $H^4(\mathcal X^c)=\mathbb R^2$
since  the cohomology of
$\mathcal X^c$  is known; see e.g. \cite[Proposition 23.1]{BT}
for the computation of the cohomology in complex coefficients.
Thus the above sequence reduces to
$$
0\to
\mathbb R^2=H^4(\mathcal X^c)
\stackrel{
\pi^*}
{\to}
H^4(\mathcal D_1^c) 
\stackrel{
\pi_*}
{\to}
\mathbb R=H^2(\mathcal X^c) 
\to
0,
$$
from which we deduce that 
$H^4(\mathcal D^c) =\mathbb R^3$. It follows
further that $\pi^*$ is an injection. The square $\hat\Omega^2$
of the K\"ahler form is clearly not contained in 
${\pi^*}H^4(\mathcal X^c)$
 since its integration
along the fibers are nonzero, thus
$H^4(\mathcal D^c) $
is generated by $\pi^*(\omega), 
\pi^*(\Omega^2)$, and $\hat\Omega^2$. This proves (1).

Note that the map $\pi$ defines
the space $\mathcal D^c_2$
as the projectivization, $\mathbb P(L)\mapsto [L]$ of the tautological
bundle $L\to [L]$ of the Grassmannian $\mathcal X^c$
of $n$ dimensional subspaces $[L]$ in $\mathbb C^{n+2}$.
The $(1, 1)$-form $\hat \Omega$ restricted
to each fiber $\mathbb P(L)$ 
is  the Chern class $c_1(\mathbb P(v))$
of the  projective space. It follows from the Leray-Hirsch
theorem \cite[(5.11), (20.7)]{BT} or by \cite[(20.8)]{BT}
that $H^4(\mathcal D^c_2)$ is of dimension $4$ and
is generated by the four forms as claimed.
\end{proof}

\subsection{Pseudo-Riemannian metrics on Period domains}
Let $\cal X= SU(n,
2)/S(U(n)\times U(2))$ and $\cal D= SU(n,2)/K'$ where $K'$ is a subgroup of $K=S(U(n)\times U(2))$. The metric on $\cal X$ comes from
the Killing form on $\mathfrak g$ whose tangent space at $o=eK$ is identified with $\mathfrak p$ according to a Cartan decomposition
$\mathfrak g= \mathfrak t \oplus \mathfrak p$. Hence the metric on a period domain $\cal D$ comes from the Killing form
on $\mathfrak{ t}/\mathfrak{t'}\oplus \mathfrak p$ where $\mathfrak t'$ is the Lie algebra of $K'$. This metric is positive definite on the horizontal direction
$\mathfrak p$ which coincides with the metric on $\cal X$, negative definite on $\mathfrak{ t}/\mathfrak{t'}$ along the fibre direction of the projection $\pi:\cal D\ra\cal X$.  If $\Omega$ is a K\"a{}hler form on $\cal X$ defined by such a metric, $\hat\Omega$  pseudo-K\"a{}hler
form on $\cal D$, then on the horizontal direction of $T\cal D$, $\hat\Omega$ and $\pi^*\Omega$ coincide since the K\"a{}hler form is determined by the metric as in Equation (\ref{kahlerform}). We normalize a quaternionic
K\"a{}hler form $\omega$ on $\cal X$ so that its restriction to a copy of $H^2_\bc$ in $\cal X$ is equal to $\Omega^2$.

\section{Totally geodesic embeddings
of the complex hyperbolic space $B$ in
$\cal X$ and their possible holomorphic
liftings to period domains} 

We consider serval natural totally geodesic imbeddings
of the complex ball $B^m$ into the quaternionic
symmetric spaces and consider the corresponding
pull-back of the quaternionic $4$-forms
and the K\"ahler forms.
In \cite{GT} the authors
study  some holomorphic liftings of mappings
from the complex hyperbolic ball 
to quaternionic hyperbolic ball to holomorphic
mapping to the (pseudo-Hermitian) twister space, which enable
them to apply a variant of Schwarz lemma
and to prove  rigidity theorems. Following a
suggestion of Toledo we shall study holomorphic
liftings in our context.

\subsection{Holomorphic and totally real  imbeddings}\label{holo}
The complex hyperbolic space $H^n_\bc$, i.e.
the symmetric space $SU(n, 1)/U(n)$,
will be realized as the unit ball $B$ in $\mathbb C^n$
as in \S2.2.
A natural holomorphic embedding of $H^n_\bc=B=
\{(z_1,\cdots,z_n)\in \bc^n:\sum |z_i|^2 <1\}$ into $\cal X$ is given by
$$\rho:(z_1,\cdots,z_n)\hookrightarrow  Z=\begin{pmatrix}
                                    z_1 I_2\\
                                    z_2 I_2 \\
                                    \cdots\\
                                    z_n I_2\end{pmatrix}$$ which seems to give rise to the maximal Toledo invariant of $\omega$.
{ The push-forward on holomorphic tangent vectors at $0\in B$ is
 $$\rho_*: x=(x_1, \cdots, x_n)\in \mathbb C^n\mapsto
X=(x_1, x_1j,\cdots, x_n, x_nj)\in \bh^{2n}, $$  where $x_l=a_l+i b_l\in\bc$ and 
on which the form $\omega_j$ and $\omega_k$ vanish
and
$$\omega(X,Y,Z,W)= \omega_i\wedge \omega_i(X,Y,Z,W)$$$$=\text{Re}(iX\cdot \bar Y)\text{Re}(iZ\cdot \bar W)-   \text{Re}(iX\cdot \bar Z)\text{Re}(iY\cdot \bar W)+\text{Re}(iX\cdot \bar W)\text{Re}(iY\cdot \bar Z)$$ $$=\text{Re}(2i\sum_{i=1}^n x_i\bar y_i)\text{Re}(2i\sum_{i=1}^n z_i\bar w_i)- \text{Re}(2i\sum_{i=1}^n x_i\bar z_i)\text{Re}(2i\sum_{i=1}^n y_i\bar w_i)$$$$+\text{Re}(2i\sum_{i=1}^n x_i\bar w_i)\text{Re}(2i\sum_{i=1}^n y_i\bar z_i)    .$$But when we write $X=\begin{pmatrix}
                                                                0 & A\\
                                                                A^* & 0 \end{pmatrix}, Y=\begin{pmatrix}
                                                                0 & B\\
                                                                B^* & 0 \end{pmatrix}$
$$\Omega_o(X,Y)=g_o(JX,Y)=4 \text{Re Tr}(i B^*A)=4\text{Re} (2i\sum_{i=1}^n x_i\bar y_i).$$ Hence
      $$     \Omega_o^2(X,Y,Z,W)=16\omega(X,Y,Z,W)=4\Omega_B^2(x,y, z, w)$$ 
for tangent vectors $X=\rho_*(x),
Y=\rho_*(Y), Z=\rho_*(z), W=\rho_*(w)$ at the image of the natural holomorphic embedding of $H^n_\bc$. In other words,
      \begin{eqnarray}
\rho^*\Omega_o^2=16 \rho^*\omega=4\Omega_B^2, \quad
\rho^*\omega=\frac{1}{16}\rho^*\Omega_o^2=\frac 14\Omega_B^2,
      \end{eqnarray}
 for the natural holomorphic embedding $\rho$ of $H^n_\bc$ into $\cal X$.}

On the other hand, another  natural embedding
\begin{equation}
\label{eq:tot-real-pre} 
\lambda:
SU(n,1)\xhookrightarrow{} Sp(n,1)\xhookrightarrow{} SU(2n,2)
\end{equation}
 gives rise to a totally real embedding
\begin{equation}
  \label{eq:tot-real}
                          \lambda:
      (z_1,\cdots,z_n)\hookrightarrow  Z=\begin{pmatrix}
                                    \begin{pmatrix}
                                     z_1 & 0\\
                                     0 & \bar z_1 \end{pmatrix}\\
                                    \cdots\\
                                   \begin{pmatrix}
                                     z_n & 0\\
                                     0 & \bar
                                     z_n \end{pmatrix} \end{pmatrix}
\end{equation}
whose Toledo invariant of $\Omega_o$ is zero. Contrary to $SU(1,1)$ case, this totally real embedding is locally rigid for $n>1$, see \cite{KKP}.
On this totally real embedding, the tangent vectors are $X=(x_1, \bar x_1j,\cdots, x_n,\bar x_nj)\in \bh^{2n},\ x_i\in\bc$, and
$$\omega(X,Y,Z,W)= \omega_i\wedge \omega_i(X,Y,Z,W)=0.$$ Hence the Toledo invariant of $\omega$ also vanishes.  

{\bf Conjecture.}  These two special embeddings suggest that the Toledo invariant of $\omega$ is maximal on holomorphic embedding and zero on totally real embedding. More precisely, if a representation attains a maximum Toledo invariant, then it should be conjugate to the holomorphic embedding above, and if the quaternionic Toledo invariant is zero then it is conjugate to the totally real embedding. \\

{\bf Warning}: If we identify the holomorphic tangent space of $\cal X$ with $\bh^{2n}$ by $(x_1+ j y_1,\cdots,x_{2n}+j y_{2n})$,
$\omega$ vanishes on holomorphic embedding and $16\omega=\Omega_o^2$ on totally real embedding. Hence the convention determines which one has a maximal Toledo invariant. In \cite{GT}, it seems that they use a different convention from ours. Nevertheless we stick to our convention in this paper.

 \subsection{Symmetric square representation
of $SU(2, 1)$ and related
4-forms
}\label{symmetric}

Denote $V=\mathbb C^{2+1}
=\mathbb C^{2}
+\mathbb Ce_3
$ the space $\mathbb C^3$
equipped with the Hermitian metric with signature
$(2, 1)$ and  $B=SU(2, 1)/U(2)$ as in \S2.2.
Recall that it is also identified  as the open domain in $\mathbb P^2$
of lines $\mathbb C(z\oplus e_3)$
with negative metric, i.e. $|z|=|(z_1,z_2)|<1$.

Let $W=V^2$ be the symmetric square of $V$. Then
$W$ is equipped with the square of
the Hermitian metric of $V$ and $W=\mathbb C^4 +\mathbb C^2
=((\mathbb C^{2} )^2 +\mathbb C e_3^2)
\oplus (\mathbb C^{2} \odot e_3)
$
is of signature $(4, 2)$. Here $e_i\odot e_j=\frac{1}{2}(e_i\otimes e_j + e_j\otimes e_i)$.
We fix an orthonormal basis
$\{E_1, E_2, E_3, E_4, E_5, E_6\}$
of $W$ with
$$
E_j=e_j^2, E_4=\frac 1{\sqrt 2}(e_1\otimes e_2 +e_2\otimes e_1)=\sqrt 2 e_1\odot e_2,$$$$
 E_{4+i}=\frac 1{\sqrt 2}(e_3\otimes e_i +e_i\otimes e_3)=\sqrt 2 e_3\odot e_i,
j=1, 2, 3, i=1, 2.
$$

The square of the defining
representation of $H=SU(2, 1)
$ defines
a representation
$$
\iota: H\to G=SU(4, 2),\
g\mapsto \otimes^2 g
$$

As in \S2.2 the symmetric space $\mathcal X$ of
$SU(4, 2)$ will be realized as the open domain
of Grassmannian manifold $Gr(2, W)$ of
$2$-dimensional complex subspaces in $W$ with negative
metric, and is further identified
with the space of  $4\times 2$ matrices $Z$
with matrix norm $\Vert Z\Vert <1$ under the identification
$$
\{Zx\oplus x; x\in \mathbb C^2\} \mapsto Z.
$$
Recall also the normalization 
of the K\"ahler metric on $B$ and on $\mathcal X$
$$
g_B(u, v)=4\text{Re}(u_1 \bar v_1 +u_2 \bar v_2), \,
g_\mathcal X(u, v)=4\text{Re}\text {Tr} v^\ast u
$$
where the real tangent space of $B$ and 
$\mathcal X$ at $z=0$ and $Z=0$ are identified with $\mathbb C^2
$ and $M_{4\times 2}$; the respective K\"ahler forms
are  $\Omega_B(u, v)=g_B(iu, v)
$ and $\Omega_\mathcal X =g_\mathcal X(iu, v)$.

The representation $\iota: H\to G$
induces a totally geodesic mapping (with the same notation)
$\iota$: $B \to \mathcal X$. In terms of the above
identification of $B$ and $X$ as submanifolds
of projective and Grassmannian manifolds
the map $\iota$ is
$$
\iota (l)= l\odot l^{\perp}
$$
where $l^{\perp}$ is the orthogonal complement of
$l$ in $V$ and $l\odot l^{\perp}$
is the subspace of vectors
$u\otimes v + v\otimes u$, $u\in l, v\in l^{\perp}$.
We find now the map $\iota_\ast $ at $z=0\in B$.

Fixing the reference line $\mathbb Ce_3
\in \mathbb P^2$
and the plane $\mathbb C^2 \odot e_3
\in  Gr(2,W)$
corresponding to the point $0\in B$
and $0\in \mathcal X$,
the map $\iota$ is
$$
\iota: \exp(tX) \cdot (\mathbb Ce_3)
\mapsto (\exp(tX) \cdot (\mathbb Ce_3))
\odot (\exp(tX) \cdot (\mathbb C^2)),
$$  where
$$
X=\begin{pmatrix}
0 & 0 & a_1\\
0 & 0 & a_2 \\
\bar a_1 & \bar a_2 & 0 \end{pmatrix}
\in \mathfrak p
$$ and $\mathfrak{su}(2,1)=\mathfrak k + \mathfrak p$
is the Cartan decomposition.
Thus $\iota_\ast (X)
$ is the linear transformation
$$
\iota_\ast (X): \mathbb C^2 \to \mathbb C^4,$$$$
\mathbb C\{E_5, E_6
\}
\mapsto
\mathbb C\{(Xe_3)\odot e_1 +
e_3\odot (Xe_1),
(Xe_3)\odot e_2 +
e_3\odot (Xe_2)
\}.
$$

Note that $X e_1=\bar a_1 e_3,\ Xe_2=\bar a_2 e_3,\ Xe_3=a_1 e_1 + a_2 e_2$ and
$$(Xe_3)\odot e_1 +
e_3\odot (Xe_1)= (a_1 e_1 + a_2 e_2)\odot e_1 + e_3 \odot \bar a_1 e_3$$
$$=a_1 e_1\odot e_1 + a_2 e_2\odot e_1 +\bar a_1 e_3\odot e_3=a_1E_1+\bar a_1 E_3+\frac{a_2}{\sqrt 2} E_4.$$

A similar calculation for the second factor shows that, under the  basis
$\{E_j\}$,
$\iota_\ast(X)$ corresponds to the $4\times 2$ matrix
$$
\begin{bmatrix} a_1
&0\\
0& a_2 \\
 \bar a_1
& \bar a_2
 \\
 \frac{a_2}{\sqrt 2}
&  \frac{a_1}{\sqrt 2}
\end{bmatrix}
= T_{a},
$$
Taking the basis vectors $X=\begin{pmatrix}
                                  0 & 0 & 1\\
                                  0 & 0 & 0\\
                                  1 & 0 & 0
                                  \end{pmatrix}, Y=\begin{pmatrix}
                                  0 & 0 & i\\
                                  0 & 0 & 0\\
                                  -i & 0 & 0 \end{pmatrix},
Z=\begin{pmatrix}
                                  0 & 0 & 0\\
                                  0 & 0 & 1\\
                                  0 & 1 & 0 \end{pmatrix}, W=\begin{pmatrix}
                                  0 & 0 & 0\\
                                  0 & 0 & i\\
                                  0 & -i & 0 \end{pmatrix}$ we find the
corresponding images in $\bh^4$ under $\iota_\ast$
$$\iota_\ast(X)=(1,0,1,\frac{1}{\sqrt 2}j),\ \iota_\ast(Y)=(i,0,-i,\frac{k}{\sqrt 2}),$$$$\iota_\ast(Z)=(0,j,j,\frac{1}{\sqrt 2}),\ \iota_\ast(W)=(0,k,-k,\frac{i}{\sqrt 2})$$
and that
\begin{eqnarray}\label{value}
\omega(\iota_\ast(X), \iota_\ast(Y), \iota_\ast(Z), \iota_\ast(W))=\frac{11}{4}.
\end{eqnarray}

Namely
$$
\iota^\ast \omega=\frac{11}{64} \Omega_B^2
$$
where $\Omega_B$
is the K\"ahler form on $B$. { We can likewise compute 
$\iota^\ast \Omega^2$ and find
$$
\iota^\ast \Omega^2 =\frac 14 \Omega_B^2.
$$
}

Now there is a natural inclusion of $SU(4, 2)$ 
as a subgroup of $SU(n, 2)$, $n\ge 4$, and we will
also view $\iota$ as a homomorphism $\iota: SU(2, 1)\to
SU(4, 2)\to  SU(n, 2)$.

\subsection{Holomorphic lifting properties}

As  mentioned in the introduction it is of interests
to know if a hamonic map 
$f: B\to \mathcal X$ can be lifted to 
a holomorphic map into a period domain \cite{Carlson-et-al} (or Griffiths-Schmid domain) $\mathcal D$, 
 namely a homogeneous complex manifold $\mathcal D=G/L$ 
with a $G$ equivariant fibration $\pi: \mathcal D\to \mathcal X=G/K$.
We give an elementary criterion below.
		
\begin{Prop}
\label{crit}
Suppose there exists a holomorphic lifting $\hat f$ 
of a harmonic map $f: B\to \mathcal X$ 
to a period domain $\mathcal
D$.
Then $d^{(1, 0)}f(v) $ 
is  nilpotent for any  $v\in \mathfrak p^{+}=T_x^{(1, 0)}(B)$.
\end{Prop}
\begin{proof}
 Let $\hat f: B\to \mathcal D
$ be a holomorphic 
lift of  $f: B\to \mathcal X$. We can fix
a reference point $x=0$ and assume that $\hat f(0)=o=eL\in \cal D=G/L$.
The holomorphic tangent space of $T_o(\mathcal D)$ 
is given by the $\mathfrak n^+$-space  as in  Section \ref{period}
and is a nilpotent algebra of $\mathfrak g$. Now $f=\pi \circ \hat f$, 
and $f_*(0)(v)=\pi_*(o)(\hat f_*(0)(v)
)$, for $v\in T^{(1,0)}_0(B)$.  But
$\hat f(0)(v)\in \mathfrak n^+$,
since $\hat f$ is holomorphic, so 
$\hat f(0)(v)\in \mathfrak n^+$ is nilpotent
which implies $f_*(0)(v)$ is nilpotent since $\pi_*$
maps nilpotent elements to nilpotent elements where $\pi$
is the  quotient map $\mathcal D=G/L\to
\mathcal X=G/K$.
\end{proof}

We find  a holomorphic lift of the non-holomorphic map $\lambda$.

\begin{lemma}\label{real} 
The totally real imbedding  $\lambda: B\to \mathcal X$
can be lifted to a holomorphic horizontal mapping into the period 
domain $\mathcal D=SU(2n, 2)/S(U(2n)\times U(1)\times U(1)).$
    \end{lemma}
\begin{proof} Let $\mathbb C^{2n+2}
=\mathbb C^{n+1}\oplus
\mathbb C^{n+1}=\mathbb C^n_+ \oplus
\mathbb C^n_+ \oplus \mathbb C_-\oplus \mathbb C_-$
 be equipped with the Hermitian form $\langle, \rangle$
of signature $(2n, 2)$
with $\mathbb C^{n+1} $ being  of signature $(n, 1)$ as before, where
the sub-indices $\pm$ indicating the positivity or negativity  of the form.
We denote the standard basis as $\{e_1,\cdots,e_n, e_{n+1}\}$ for the first factor $\mathbb C^{n+1} $
and $\{f_{1},\cdots, f_{n+1}\}$ for the second summand $\mathbb C^{n+1} $.

Then according to the notation in Section \ref{negative},  the space $\mathcal X$ is the set of pairs of $2n$-coordinates $(z_1,\cdots,z_{n},w_1,\cdots,w_{n}), (z_1',\cdots,z_{n}',w_1',\cdots,w_{n}')$ such that
$$z_1 e_1 
+w_1 f_{1} 
+\cdots+ z_n  e_n 
+ w_n f_{n} 
+ e_{n+1},$$
$$ z_1' e_1 
+w_1' f_1 
+\cdots+z_n'  e_n 
+w_n'  f_{n} 
+ f_{n+1}
$$ represents a 2-dimensional negative definite subspace.

Consider the flag manfolds $\mathcal D$ of 
 pairs $(p_1, p_2)$, where $p_1$ is an one-dimensional
subspace  with negative form $\langle, \rangle$,
and $p_2$ is a $(2n+1)$-dimensional subspace with signature
$(2n, 1)$ containing $p_1$.
Then $\mathcal D$ is a $G=SU(2n, 2)$-homogeneous manifold and
$
\mathcal D= 
G/L, \quad L=S(U(2n)\times U(1)\times U(1))
.$ 
The homogeneity follows easily by elementary linear algebra. Fixing
the  point 
$p_0=(p_1, p_2)$, $p_1=\mathbb C e_{n+1}$
and $p_2=\mathbb C^{2n+1}=
\mathbb C^n_+  \oplus p_1  \oplus \mathbb C_+^n\oplus  0$
as a reference, then
the isotropic group of $p$ in $G$ is exactly 
$L$, proving the realization of
$\mathcal D$. The complex structure on $\mathcal D$
is realized as an open subset of
flag manifold $\mathcal D^c=SU(2n+2)/S(U(2n)\times U(1)\times U(1))=SL(2n+2, \mathbb C)/P$
 of all pairs $(p_1, p_2)$ of one dimensional
subspaces $p_1$ in $(2n+1)$-dimensional subspaces $p_2$, considered
as a homogeneous space of $SL(2n+2, \mathbb C)$ with
$P$ being a Borel subgroup as the isotropic subgroup
fixing the reference point $p_0=(p_1, p_2)=(\mathbb C e_{n+1},
\mathbb C^{2n+1})$ above.
The fibration $\pi: \mathcal D\to \mathcal X$ is then 
the map
$$
(p_1, p_2) \mapsto p_1\oplus p_2^{\perp};
$$
clearly  $p_1\oplus p_2^{\perp}$ is a two-dimensional
subspace in $\mathbb C^{2n+2}$ of signature $(0, 2)$, namely
it is an element in $\mathcal X$, and this map is $G$-equivariant.
Now we consider the map $\tilde \lambda: B\to \mathcal D$,
$$
\tilde \lambda(z)=(p_1, p_2);\ p_1=  (z_1e_1+\cdots + z_ne_n+e_{n+1}), 
p_2=  (\bar z_1 f_1+\cdots+\bar z_n f_n +f_{n+1})^{\perp},
$$
the orthogonal complement being
computed with respect the fixed indefinite form.
 It follows immediately from the formula that $\tilde\lambda$
is holomorphic in $z$. To be more precise,
 complex  coordinates near $p_0$ can be chosen
as
$$
(x, x', y, y')
\in \mathbb C^{n}\times \mathbb C^{n+1}\times \mathbb C^{n}
\times \mathbb C^{n}
\mapsto (p_1, p_2),
p_2=p_1\oplus q_2,
$$
$$
 p_1=
\mathbb C(e_{n+1}\oplus (x_1 e_1 +\cdots + x_n e_n
+ x_{1}' f_1  +\cdots + x_{n+1}' f_{n+1})),
$$
$$
 q_2=\text{span}\{ e_{1} +y_1 e_{n+1}, \cdots,
 e_{n} +y_n e_{n+1};\quad f_1 +
y_1' f_{n+1}, \cdots,
f_n+y_n' f_{n+1}\}.
$$
In terms of these coordinates the map $\tilde\lambda$
is 
$$\lambda: z=(z_1, \cdots, z_n)\mapsto 
(x, x', y, y') =(z, 0, 0, z)
$$
and is indeed holomorphic.
We have further
$$
\pi\circ \tilde\lambda:
z\mapsto (p_1, p_2)\mapsto p_1\oplus p_2^{\perp}
= \begin{pmatrix}
                                    \begin{pmatrix}
                                     z_1 & 0\\
                                     0 & \bar z_1 \end{pmatrix}\\
                                    \cdots\\
                                   \begin{pmatrix}
                                     z_n & 0\\
                                     0 & \bar
                                     z_n \end{pmatrix} \end{pmatrix}
$$
This corresponds precisely to the map $\lambda$
in   (\ref{eq:tot-real}).
\end{proof}

We consider now the  lifting property of $\iota$.

\begin{lemma}\label{non-lift} The above quadratic map $\iota: B\to \mathcal X$
does not lift to a holomorphic horizontal mapping into $\cal D_1=SU(4, 2)/S(U(4)\times U(1)\times U(1))$.
       \end{lemma}

\begin{proof}

Suppose $F$ is a holomorphic horizontal lifting.
The  complexification of $F_\ast$,
still denoted by $F_\ast$, maps $\mathfrak b^+$,
the holomorphic tangent space of $B$
to holomorphic tangent space $\mathfrak n^+$ (up
to changing of base point under SU(2)-action).
In particular the image 
of $\mathfrak b^+$  under $\iota_\ast$ is contained in
$\pi_\ast (\mathfrak n^+)$ where $\pi:\cal D_1 \ra \cal X$ is the natural projection. In particular
$\iota_\ast(\mathfrak b^+)
$ is a subspace of $\pi_\ast (\mathfrak n^+)$.
Using the above formula for $\mathfrak n^+$
we find that elements in $ \iota_\ast(\mathfrak b^+)\subset \pi_\ast (\mathfrak n^+)$\
are of the form
$$\begin{pmatrix}
0 & 0  & * \\
* &  0 & 0 \\
0 & 0 & 0 \end{pmatrix}.
$$

However our computations above show that
for
\begin{equation}
\label{horiz}
\begin{split}
S&=\begin{pmatrix}
0 & 0 & a_1\\
0 & 0 & a_2 \\
0 & 0 & 0 \end{pmatrix}\\
&=\frac 12\left(
\begin{pmatrix}
0 & 0 &  a_1\\
0 & 0 &  a_2 \\
 \bar a_1 &  \bar a_2 & 0 \end{pmatrix}
-\sqrt{-1}
\begin{pmatrix}
0 & 0 & i a_1\\
0 & 0 & i a_2 \\
-i \bar a_1 & -i \bar a_2 & 0 \end{pmatrix}
\right)
\in \mathfrak b^+,
\end{split}
\end{equation}
its image $\iota_\ast(S)$ is
$$
\iota_\ast(S)=
\begin{bmatrix}
0 & U\\
V& 0 \end{bmatrix}
$$
where
$$
U=
\begin{pmatrix}
a_1 & 0\\
0 & a_2 \\
0 & 0 \\
\frac{a_2}{\sqrt 2}
&\frac{a_1}{\sqrt 2}
 \end{pmatrix},\,
V=
\begin{pmatrix}
0 & 0 &a_1 &0\\
0 & 0& a_2 &0 \end{pmatrix}.
$$
This is a contradiction to the form
of $\pi_\ast(\mathfrak n^+)$.
\end{proof}

We may construct similarly the twister
cover $SU(2m,2)/S(U(2m)\times U(1)\times U(1))$
of $\mathcal X=SU(2m,2)/S(U(2m)\times U(2))$ as above and 
consider the question of holomorphic
lifting of maps from $B$ to $\mathcal X$. The
above proof leads to  a simple
necessary condition for the existence.
\begin{co}Given a representation $\rho:\Gamma\subset SU(n,1)\ra SU(2m,2)$, with a $\rho$-equivariant map $f$
on the associated symmetric spaces $B=SU(n,1)/S(U(n)\times U(1)),\ 
\mathcal X=SU(2m,2)/S(U(2m)\times U(2))$
 and a fixed base point $o=[K]\in SU(n,1)/S(U(n)\times U(1))$,
let
$$Df_o\begin{pmatrix}
  0 & X \\
  X^* & 0\end{pmatrix}= \begin{pmatrix}
                        0 & U \\
                        U^* & 0\end{pmatrix}$$ be a differential map at the base point, where $X\in \bc^n, U=(U_1,U_2)\in M_{2m\times 2}$.
                        For $f$ to have a holomorphic lift to the
                        twistor space, every component of  $U_1$  is
                        an conjugate $\mathbb C$-linear  in $X$, 
and every component of  $U_2$  is a
                        $\mathbb C$-linear in $X$.
Here we regard $Df_o$ as a map from $\bc^n$ to $M_{2m\times 2}=\bc^{4m}$.
\end{co}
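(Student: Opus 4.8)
The plan is to run the argument of Lemma~\ref{holo} verbatim, with $4$ replaced by $2m$ and carried out at a general base point, extracting a numerical obstruction rather than a single contradiction. As in the proof of Lemma~\ref{holo}, realize the twistor cover $D=SU(2m,2)/S(U(2m)\times U(1)\times U(1))$ as the open set of pairs $(l,\la)\in\bp(W)\times\bp(W^\ast)$ with $l$ and $\ker\la$ negative definite, where $W=\bc^{2m+2}$ carries its form of signature $(2m,2)$; then $\pi\col D\ra\cal X$, $(l,\la)\mapsto l\oplus(\ker\la)^\perp$, is the twistor $\bp^1$-fibration over $\cal X=SU(2m,2)/S(U(2m)\times U(2))$. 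Given a holomorphic lift $F$ of $f$ (so $\pi\circ F=f$), the point $F(o)$ lies in the $\bp^1$-fiber over $f(o)$; after moving by the $SU(2)\subset U(2)$ that acts only inside that fiber (as in Lemma~\ref{holo}) I may assume $f(o)=o\in\cal X$ and $F(o)=(\bc E_{2m+2},\bc\epsilon_{2m+1})$. This normalization is exactly what fixes, inside the $\bc^2$-factor $\bc\{E_{2m+1},E_{2m+2}\}$ of $\cal X$, the orthogonal splitting in which $U=(U_1,U_2)$ is recorded: $U_2$ is the component along the line cut out by $F(o)$ and $U_1$ the orthogonal one.

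Next I would compute the two holomorphic tangent spaces. With $H=\operatorname{diag}(0_{2m},1,-1)$ as in the excerpt, the holomorphic tangent space $\mathfrak n$ of $D$ at $F(o)$ is the sum of the positive $\operatorname{ad}(H)$-eigenspaces in $\mathfrak{sl}(2m+2,\bc)$; the one-line root computation gives, in block sizes $(2m)+1+1$,
$$\mathfrak n=\left\{\begin{pmatrix}0&0&\ast\\ \ast&0&\ast\\ 0&0&0\end{pmatrix}\right\},$$
exactly as for $m=2$. The vertical $(1,0)$-direction of the fiber at $F(o)$ is the $(2,3)$-block, i.e. the off-diagonal slot inside the $U(2)$-factor, and $\pi_\ast$ kills it; hence
$$\pi_\ast(\mathfrak n)=\left\{\begin{pmatrix}0&0&\ast\\ \ast&0&0\\ 0&0&0\end{pmatrix}\right\}.$$
Regrouping into block sizes $(2m)+2$, write $\mathfrak p^{\bc}=\mathfrak p^+\oplus\mathfrak p^-$ with $\mathfrak p^+$ the upper-right $(2m\times 2)$ slot, so that an element of $\mathfrak p^+$ is a matrix $(A_1,A_2)$ with $A_i$ its $E_{2m+i}$-column, and $\mathfrak p^-$ the lower-left slot. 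Then $\pi_\ast(\mathfrak n)$ consists precisely of the elements whose $\mathfrak p^+$-part has vanishing $A_1$-column and whose $\mathfrak p^-$-part has vanishing $E_{2m+2}$-row, the latter being the conjugate-transpose of the $A_2$-column.

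Finally I would feed $f$ in. Holomorphicity of $F$ gives $F_\ast^{\bc}(\mathfrak b^+)\sub\mathfrak n$, where $\mathfrak b^+\cong\bc^n$ is the holomorphic tangent space of $B$ with coordinate $X$; composing with $\pi$ and using $\pi\circ F=f$ gives $f_\ast^{\bc}(\mathfrak b^+)\sub\pi_\ast(\mathfrak n)$. Now decompose the real-linear map $X\mapsto U(X)=(U_1(X),U_2(X))$ as $U=P+Q$ into its $\bc$-linear part $P=(P_1,P_2)$ and its conjugate-$\bc$-linear part $Q=(Q_1,Q_2)$. Unwinding the complexification exactly as in Section~\ref{symmetric} (where $\iota_\ast(S)=\left[\begin{smallmatrix}0 & U\\ V & 0\end{smallmatrix}\right]$ for $S\in\mathfrak b^+$, with $U$ the $\bc$-linear data and $V$ the conjugate-transpose of the conjugate-$\bc$-linear data of the differential), one finds that for $X\in\mathfrak b^+$ the $\mathfrak p^+$-part of $f_\ast^{\bc}(X)$ is $P(X)$ and its $\mathfrak p^-$-part is the conjugate-transpose of $Q$ evaluated at $X$. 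The containment $f_\ast^{\bc}(\mathfrak b^+)\sub\pi_\ast(\mathfrak n)$ then forces $P_1\equiv 0$ and $Q_2\equiv 0$; that is, $U_1=Q_1$ is conjugate-$\bc$-linear in $X$ and $U_2=P_2$ is $\bc$-linear in $X$, which is the assertion.

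The only delicate point is the bookkeeping of the last two paragraphs: matching the three matrix blocks of $\pi_\ast(\mathfrak n)$ with the decomposition $\mathfrak p^{\bc}=\mathfrak p^+\oplus\mathfrak p^-$ and with the two columns $U_1,U_2$, and carrying along the $SU(2)$-normalization of $F(o)$, which is what decides that $U_1$ is the ``conjugate-linear'' column and $U_2$ the ``linear'' one. Everything else is identical to the proof of Lemma~\ref{holo}, which is the special case $n=m=2$ together with the observation that the $\iota_\ast$ computed there already violates this necessary condition.
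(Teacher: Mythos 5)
Your proposal is correct and follows essentially the same route as the paper: both arguments take the description of the holomorphic tangent space $\mathfrak n$ of the twistor space and of $\pi_\ast(\mathfrak n)$ from the proof of Lemma \ref{holo} (generalized from $SU(4,2)$ to $SU(2m,2)$), and then read off from the containment of the image of $\mathfrak b^+$ in $\pi_\ast(\mathfrak n)$ that the first column of $Df_o$ has vanishing $\mathbb C$-linear part and the second column vanishing conjugate-linear part. The only difference is cosmetic: you phrase the last step via the invariant decomposition $U=P+Q$ into $\mathbb C$-linear and conjugate-$\mathbb C$-linear parts, where the paper carries out the equivalent computation in real coordinates, obtaining $A=-D$, $B=C$.
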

\begin{proof}Note that $Df_o$ is a real linear map between real tangent spaces $T_o B$ and $T_{f(o)} \mathcal X$.
For $X=(z_1,\cdots, z_n)$ and $z_i=x_i+i y_i$, let $X=(x_1,\cdots, x_n, y_1,\cdots,y_n)=(x,y)$, with the same notation, be the corresponding coordinates in $\br^{2n}$. Then $i X$ corresponds to $$iX=(-y_1,\cdots,-y_n,x_1,\cdots,x_n)=(-y,x)$$ as usual.
For $f$ to lift to the holomorphic map to the twistor space, the equation (\ref{horiz}) should read
 $$Df_o(X-\sqrt{-1} iX)= (U_1',U_2')=(0, U_2').$$ Hence from $U_1'=0$, we get
 $$    \begin{pmatrix}
     A & B \\
     C & D \end{pmatrix} \begin{bmatrix}
                          x \\
                          y \end{bmatrix} -\sqrt{-1} \begin{pmatrix}
     A & B \\
     C & D \end{pmatrix} \begin{bmatrix}
                         -y \\
                          x \end{bmatrix}=0.$$ It is
$$         \begin{pmatrix}
           Ax+By \\
           Cx+Dy \end{pmatrix}+ \begin{pmatrix}
                                 -Cy+Dx \\
                                 -(-Ay+Bx)\end{pmatrix}=0.$$
From this we get
$$A=-D,\ B=C.$$  This exactly implies that every component function of
$U_1$ is 
conjugate $\mathbb C$-linear in $X=(z_1,\cdots,z_n)$ variables.
Using the equation for $U^*$, a similar calculation shows that every
component function
 of $U_2$ is $\mathbb C$-linear in $z_i$ variables for $f$ to have a holomorphic lift to the twistor space.
\end{proof}


We prove however that
the map $\iota$ can be
lifted to a holomorphic mapping to 
$\cal D_2=SU(4,2)/S( U(3)\times U(1)\times U(2))$ 

Let $f$ associate the triple
$(S^2 L^{\perp}, L^2, L\odot L^{\perp})$
 to a negative line $L$ in $V=\mathbb C^{2+1}$.
Then
 $S^2 L^{\perp}$ is a positive 3-dimensional space in $W$, $L^2$ is a
 positive line in $W$, and $L\odot L^{\perp}$ is a negative plane in
 $W$.
In the  explicit coordinates, if $L=\bc e_3$, then $L^\perp=\langle e_1, e_2\rangle$ and
$$S^2L^\perp
=\langle e_1^2, e_2^2, e_1\odot e_2\rangle=\langle E_1,E_2,
E_4\rangle,\ 
L^2=\langle e_3^2\rangle=\langle E_3\rangle,$$
$$L\odot L^\perp=\langle e_1\odot e_3, e_2\odot e_3\rangle=\langle
E_5,E_6\rangle.$$
 Hence the stabilizers 
of $ S^2 L^{\perp}, L^2, L\odot L^{\perp}$  are
$U(3), U(1)$ and $U(2)$ respectively. Therefore
$f: L \mapsto (S^2 L^{\perp}, L^2, L\odot L^{\perp})$ induces
a map
$$f: B\ra \cal D_2=SU(4,2)/S(U(3)\times U(1)\times U(2)).$$
Since $$\iota(L)=(L\odot L^{\perp},(L\odot L^{\perp})^\perp),$$ $f(L)=(  ( S^2 L^{\perp}, L^2),\iota(L))$ is a lifting of $\iota$ to $\cal D_2$.

We claim that $f$ is holomorphic with respect to a complex structure
on the period domain $\mathcal D_2$ introduced in Section \ref{period}.


Hence the claim follows from the fact that the holomorphic tangent vector in $B$
$$\label{hori}
S=\begin{pmatrix}
0 & 0 & a_1\\
0 & 0 & a_2 \\
0 & 0 & 0 \end{pmatrix}
=\frac 12\left(
\begin{pmatrix}
0 & 0 &  a_1\\
0 & 0 &  a_2 \\
 \bar a_1 &  \bar a_2 & 0 \end{pmatrix}\\
-\sqrt{-1}
\begin{pmatrix}
0 & 0 & i a_1\\
0 & 0 & i a_2 \\
-i \bar a_1 & -i \bar a_2 & 0 \end{pmatrix}
\right)
\in \mathfrak b^+
$$ is mapped to $\iota_\ast(S)$
$$
\iota_\ast(S)=
\begin{bmatrix}
0 & U\\
V& 0 \end{bmatrix}
$$ as in the proof of Lemma \ref{holo}
where
$$
U=
\begin{pmatrix}
a_1 & 0\\
0 & a_2 \\
0 & 0 \\
\frac{a_2}{\sqrt 2}
&\frac{a_1}{\sqrt 2}
 \end{pmatrix},\,
V=
\begin{pmatrix}
0 & 0 &a_1 &0\\
0 & 0& a_2 &0 \end{pmatrix}.
$$

Here we give another way to prove the liftability. Note that $\cal D_2$ can be identified with the open $SU(4,2)$ orbit in the homogeneous complex manifold $\hat{ \cal D}$
of partial flags consisting of lines inside 3-planes.  The stabilizer of the partial flag is $S(U(3)\times U(1)\times U(2))$. There is an obvious holomorphic map $F$ from $\bc \mathbb P^2$ to $\hat{\cal D}$, which associates the flag $l\odot l\subset l\odot \bc^{2,1}$ to a  line in $\bc^{2,1}$. The restriction of this map to $H^2_\bc\subset \bc\mathbb P^2$ is a holomorphic map. Furthermore the projection from $\cal D_2$ to $\cal X$ is 
$$l\odot l\subset l\odot \bc^{2,1} \ra  l\odot l^\perp$$ and hence $\iota=\pi\circ F$.

Now we show the horizontality, i.e., the image lies in the form $L\odot L^\perp$. For any smooth curve in $B$,
denote it by $L(t)=\langle v_0+ w(t) \rangle$ where $w(t)\subset v_0^\perp$, a differentiable family of lines, such that $w(0)=0, w'(0)\in v_0^\perp$. Then we can write $L(t)^\perp= \langle v(t)\rangle^\perp$ where $v(0)=v_0,\ v'(0)=w'(0)\in v_0^\perp$.

  Since $L(t)\odot L(t)^\perp$ is already horizontal,
it suffices to show the horizontality of $L(t)^2$ and $S^2(L(t)^\perp)$.
But $$L(t)^2=\langle (v_0 + w(t))\odot (v_0 + w(t))\rangle=\langle v_0^2+ v_0\odot w(t)+w(t)^2\rangle.$$ Hence
$$\frac{d}{dt}|_{t=0} L(t)^2=v_0\odot w'(0)\in L(0)\odot L(0)^\perp.$$
Similar calculation shows that
$$\frac{d}{dt}|_{t=0} S^2(L(t)^\perp)=\frac{d}{dt}|_{t=0} \langle
v(t)^\perp \odot v(t)^\perp \rangle$$
$$=\langle v'(0)^\perp \odot
v_0^\perp\rangle\subset \langle v_0\odot v_0^\perp\rangle\subset
L(0)\odot L(0)^\perp,$$
completing the proof.

\section{Character variety $\chi(\Gamma, SU(n,2))$}
\begin{thm}
There are at least 7 distinct connected components in 
$\chi(\Gamma, SU(n, 2))$, $n\ge 4$,  where $\Gamma\subset SU(2,1)$ is a uniform lattice in $SU(2, 1)$.
\end{thm}
\begin{proof} We view $SU(4, 2)$ as a subgroup of $SU(n, 2)$
as above. Let $X=\begin{pmatrix}
                                  0 & 0 & 1\\
                                  0 & 0 & 0\\
                                  1 & 0 & 0
                                  \end{pmatrix}, Y=\begin{pmatrix}
                                  0 & 0 & i\\
                                  0 & 0 & 0\\
                                  -i & 0 & 0 \end{pmatrix},
Z=\begin{pmatrix}
                                  0 & 0 & 0\\
                                  0 & 0 & 1\\
                                  0 & 1 & 0 \end{pmatrix}, W=\begin{pmatrix}
                                  0 & 0 & 0\\
                                  0 & 0 & i\\
                                  0 & -i & 0 \end{pmatrix}$ be the standard basis of $T_oB=\mathfrak p$ such that
      $$
\Omega^2_B(X,Y,Z,W)=\Omega_B(X,Y)\Omega_B(Z,W)=4\text{Tr}(YJX)\text{Tr}(WJZ)
=4\cdot 4=16.$$                           
Consider first the holomorphic
embedding  $\rho$ in Section \ref{holo}. 
The images of the above vectors under $\rho_*$,
written as block $3\times 3$-matrix with each entry being $2\times 2$
matrix, are
$$\rho_*(X)=\begin{pmatrix}
                                 0& 0& I_2\\
                                   0&  0 & 0 \\
                                  I_2& 0 & 0          \end{pmatrix},\ 
\rho_*(Y)=\begin{pmatrix}
                                 0 & 0 & i I_2\\
                                   0& 0 & 0 \\
                                 -iI_2 & 0 &0 
                                  \end{pmatrix},$$$$
\rho_*(Z)=\begin{pmatrix}
                                  0&0 &0  \\
                                  0&0  &  I_2 \\
                                  0 & I_2 & 0 \\
                                  \end{pmatrix},\ 
\rho_*(W)=\begin{pmatrix}
                                                    0& 0 & 0
\\
                                  0 & 0 & iI_2 \\
                                  0 &  -iI_2 & 0 \\
 \end{pmatrix},$$ which correspond to
$$\rho_*(X)=(1,j,0,0),\ \rho_*(Y)=(i,ij,0,0),$$$$\rho_*(Z)=(0,0,1,j),\ \rho_*(W)=(0,0,i,ij)$$ in $\bh^2$ coordinates, see Section \ref{Quaternionic}.
Then by Equation (\ref{kahler}) $$\rho^*\omega(X,Y,Z,W)=4, \ \text{i.e.}\ \rho^* \omega=\frac{1}{4} \Omega_B^2$$ whereas for the square representation $\iota$, by Equation (\ref{value})
$$\iota^*\omega(X,Y,Z,W)=\frac{11}{4},\ \text{i.e.}\ \iota^*
\omega=\frac{11}{64} \Omega_B^2.$$ For the totally real embedding
  (\ref{eq:tot-real}), the pull-back form vanishes. This implies that
the quaternionic Toledo invaraints are
$$\int_{\Gamma\backslash H^2_\bc} \rho^*\omega=\frac{1}{4}\int_{\Gamma\backslash H^2_\bc} \Omega_B^2=\frac{1}{4}\vol(\Gamma\backslash H^2_\bc),$$$$\ \int_{\Gamma\backslash H^2_\bc} \iota^*\omega=\frac{11}{64}\int_{\Gamma\backslash H^2_\bc} \Omega_B^2=\frac{11}{64}\vol(\Gamma\backslash H^2_\bc),\ 0$$ respectively.

The last representation with a different Toledo invariant is given by the embedding $\phi:(z_1,\cdots,z_n)\ra ((z_1,0),\cdots,(z_n,0))$ which produces that
\begin{equation}
  \label{eq:phi-p-b}
\phi^* \omega=\frac{1}{16} \Omega_B^2.  
\end{equation}
Since the quaternionic Toledo invariant is constant on each connected component, we get 4 different connect components. By taking
the complex conjugate of $\rho, \iota $ and $\phi$
we get then 7 components. This completes the proof. 
\end{proof}
Note that for a lattice $\Gamma\subset SU(2,1)$, the holomorphic
embedding 
$\rho$ corresponds to the diagonal embedding
$\gamma\ra (\gamma,\gamma)\in SU(2,1)\times SU(2,1)\subset SU(4,2)$, and the totally real embedding to
$\gamma\ra(\gamma,\overline\gamma)$ whereas the last example in
the previous theorem corresponds to the embedding $\gamma\ra (\gamma,id)\in SU(2,1)\times SU(2,1)\subset SU(4,2)$.

In this direction, Toledo constructed the following examples \cite{Domingo}.
There are examples of two complex hyperbolic surfaces $X=\Gamma\backslash H^2_\bc$ and $Y=\Gamma'\backslash H^2_\bc$ with a surjective holomorphic map
$f:X\ra Y$ with $0<\deg(f)< \frac{\vol(X)}{\vol(Y)}$, which  induces a group homomorphism $f_*: \Gamma\to \Gamma'$.
See also \cite{DM, Mos} for the constructions of various
subgroups $\Gamma'\subset \Gamma$ of finite index.
(The volumes $\vol(X)$ and $\vol(Y)$ can  be further computed
by using the Chern-Gauss-Bonnet theorem for orbifolds.)
Consider the following representation
$$
 \Gamma\stackrel{f_*}{\rightarrow} \Gamma'\stackrel{\phi}{\rightarrow}
SU(4,2),
$$
 where $\phi$ is the restriction of the holomorphic embedding (5)
above.
Then the quaternionic Toledo invariant of this representation is 
$$\int_X f^*(\phi^*\omega)
=\int_X
f^*(\frac{1}{16}\Omega^2_B)
=\frac{1}{16}
{\deg(f)\vol(Y)} < \frac{1}{16}
{\vol(X)}, 
$$
with $\frac{1}{16}
{\vol(X)}$ being the smallest  among the Toledo invariants
in Theorem 4.1 except zero case.
We obtain thus
an improvement of Theorem 4.1 in this case, viz

\begin{Prop}
Let $\Gamma\subset \Gamma'$ be as above. There exist at least
$9$  distinct components in $\chi(\Gamma,SU(4,2))$.
\end{Prop}

Some  versions of local rigidity for the representations
in some of the components above have been studied in 
\cite{KKP, Klingler-inv}.


\section{Milnor-Wood inequality and Global rigidity for quaternionic Toledo invariant}
In this section we show that if there exists a holomorphic horizontal lifting, then the Milnor-Wood type inequality holds with a quaternionic K\"ahler form.
In this section, we normalize the metrics on $H^2_\bc$ and on $\cal X=SU(2n,2)/S(U(2n)\times U(2))$ so that the holomorphic sectional curvatures are equal to $-1$.
\begin{lemma}Let $\cal D$ be a period domain of $\cal X$ with a pseudo-K\"ahler metric such that it is negative definite on vertical directions and positive definite on horizontal directions. Its associated pseudo-K\"ahler form is $\hat\Omega$ which agrees
with $\pi^*(\Omega)$ on the horizontal direction where
the K\"ahler form on $\cal X$ is denoted $\Omega$. If $f:H^2_\bc\ra \cal D$ is a   horizontal holomorphic map, then Schwarz lemma holds, i.e., $f^*(\hat\Omega)\leq \Omega_B$ where $\Omega_B$ is the K\"ahler form on $H^2_\bc$.
Equality holds at every point if and only if $f$ is a
horizontal holomorphic geodesic embedding of $H^2_\bc$ in $\cal D$.
\end{lemma}
\begin{proof}The proof is exactly the same as the one given in 
Theorem 3.3 in \cite{GT}. The idea is as follows. First consider the case a mapping from the hyperbolic plane 
$H^1_\bc$, $f:H^1_\bc\ra \cal D$. If $f^*\hat\Omega=u\Omega_{B^1}$. Then by the method of Section 2 of Chapter I, III of \cite{Koba},  one can show that $u\leq 1$. If equality holds at every point, then $f$ is an isometric immersion. If $M$ is the image and $\alpha$ is the second fundamental form, then since both holomorphic sectional curvatures are $-1$, one can show that $\alpha=0$, consequently $f$ is a totally geodesic holomorphic embedding. For $f:H^2_\bc\ra\cal D$ case, by considering all hyperbolic hyperbolic planes
$H^1_\bc$ in $H^2_\bc$, one concludes that the second fundamental form vanishes, hence totally geodesic embedding.
\end{proof}
\begin{Prop}Let $M=\Gamma\backslash H^2_\bc$. Suppose $\rho:\Gamma\ra SU(n,2)$ is a representation whose associated $\rho$-equivariant harmonic map
$f:B\ra \cal X$ lifts to a holomorphic horizontal map $\hat f$ to $\cal D$. 
Then
the Milnor-Wood type inequality holds. {If equality holds, then it is a holomorphic embedding.}
\end{Prop}
\begin{proof}Since $H^4(M,\br)=\br$, the pull-back of 4-forms to $M$ are all proportional to each other up to exact forms.
Specially $$f^*\omega=\hat f^*(\pi^*\omega)=c \hat f^*( \pi^*\Omega^2)+d\alpha=c f^*\Omega^2+d\alpha.$$
A K\"ahler form $\hat\Omega$ of $\cal D$ agrees with $\pi^*(\Omega)$ on horizontal directions, hence $\hat f^*(\hat \Omega)=\hat f^* (\pi^* \Omega)=f^*\Omega$.
But since $\hat f$ is holomorphic, by Schwarz Lemma,
$$f^*\Omega=\hat f^*( \pi^*\Omega) \leq \Omega_B.$$ Hence
$$\frac{1}{c}\int_M f^*\omega=\int_M f^*\Omega^2\leq \int_M \Omega^2_B=vol(M).$$

Now since we normalize $\omega$ so that its restriction to complex 2-dimensional hyperbolic space is equal to $\Omega^2$, we have $c=1$ and
$\hat f^*(\pi^*\omega)= \hat f^*( \pi^*\Omega^2)+ d\alpha$, and consequently the Milnor-Wood inequality
$$\int_M f^*\omega=\int_M f^*\Omega^2\leq vol(M).$$
Suppose $\int_M f^*\omega=vol(M)$. Then $f^*\Omega^2=\Omega^2_B$ pointwise, which implies that $f$ is a holomorphic embedding by the previous lemma.
\end{proof}

%

     \end{document}